\newtheorem{theorem}{Theorem}
\newtheorem{conjecture}[theorem]{Conjecture}
\newtheorem{lemma}[theorem]{Lemma}
\newtheorem{lemma*}{Lemma*}
\newtheorem{prop}[theorem]{Proposition}
\theoremstyle{definition}
\newtheorem{question}{Question}
\newtheorem{remark}[theorem]{Remark}
\theoremstyle{definition}
\newtheorem{example}{Example}
\newtheorem{example*}[example]{Example*}
\renewcommand{\P}{\mathbb P}
\newcommand{\R}{\mathbb R}
\newcommand{\Z}{\mathbb Z}
\DeclareMathOperator{\vdim}{vdim}
\newcommand{\cO}{\mathcal O}
\DeclareMathOperator{\edim}{\rm{expdim}}
\begin{document}

\title{A few questions about curves on surfaces}

\author[Ciliberto]{Ciro Ciliberto}
\address{Universit\`a degli Studi di Roma Tor Vergata, Via della
  Ricerca Scientifica 1, 00133, Rome,
  Italy}\email{cilibert@axp.mat.uniroma2.it}
  
  \author[Knutsen]{Andreas Leopold Knutsen}
\address{Department of Mathematics, University of Bergen, Postboks 7800,
5020 Bergen, Norway}
\email{andreas.knutsen@math.uib.no}

\author[Lesieutre]{John Lesieutre}
\address{University of Illinois at Chicago,
Math, Stat \& CS, Room 411 SEO,
851 S Morgan St, M/C 249,
Chicago, IL 60607-7045, USA}
\email{jdl@uic.edu}

 \author[Lozovanu]{Victor Lozovanu}
 \address{Universit\`a degli Studi di Milano--Bicocca, Via Cozzi, 55, 20126, Milano, Italy}
\email{victor.lozovanu@unimib.it}
 \email{victor.lozovanu@gmail.com}
  
  \author[Miranda]{Rick Miranda}
\address{Colorado State University, Department of mathematics, College of Natural Sciences,
117 Statistics Building, Fort Collins, CO 80523, USA }
\email{rick.miranda@math.colostate.edu}

 \author[Mustopa]{Yusuf Mustopa}
\address{Tufts University,
Department of Mathematics,
503 Boston Avenue,
Bromfield-Pearson
Room 106,
Medford, MA 02155, USA}
\email{yusuf.mustopa@tufts.edu}

\author[Testa]{Damiano Testa}
\address{Mathematics Institute, Zeeman Building, University of Warwick, Coventry CV4 7AL, Great Britain}
\email{adomani@gmail.com}


\begin{abstract} In this note we address the following kind of question: let $X$ be a smooth, irreducible, projective surface and \(D\) a divisor on $X$ satisfying some sort
of positivity hypothesis, then is there some  multiple of $D$ depending only on $X$  which is effective or movable?  We describe some
examples, discuss some conjectures and prove some results that suggest that the answer should in general be negative, unless one puts some really strong hypotheses either on \(D\) or on \(X\).
\end{abstract}
\maketitle

\section{Introduction}


Let \(X\) be a smooth, irreducible, projective, complex surface, henceforth simply called surface.  A celebrated result
by Franchetta and Bombieri (see~\cite{B,F1,F2}) implies that if $X$ is of general type, i.e., the canonical line bundle \(K_X\) is big, then \(h^0(X,\cO_X(5K_X))
\geq 3\).  It seems natural to ask whether there might be similar
effective non--vanishing results for divisors on \(X\) other than the
canonical, asserting that if a divisor \(D\) satisfies some type
of positivity hypothesis, then some multiple $mD$ of that divisor, with $m$ depending only on $X$,
must be \emph{effective}, i.e.,  $h^ 0(X, \cO_X(mD))>0$, or even \emph{movable}, i.e.\ $h^ 0(X, \cO_X(mD))>1$.  

It appears that this is unlikely
without some strong hypotheses either on \(D\) or on \(X\). Here we describe some
examples, discuss some conjectures and prove some results that any future research in this direction should take into
account.  We have been motivated by the following three questions, asked by A. L. Knutsen during the Warsaw workshop ``Okounkov bodies and Nagata type Conjectures'' held at the Banach Centre in September 2013,  discussed there and also during the workshop ``Recent advances in linear series and Newton--Okounkov bodies'' held in Padua in February 2015.

\begin{question}
\label{q1}
  Does there exist a constant \(m_1=m_1(X)\) such that if \(D\) is any
  divisor with \(h^0(X,\cO_X(D)) = 1\) and $D^2>0$, then one has  \(h^0(X,\cO_X(m_1D)) \geq 2\)?
\end{question}
\begin{question}
\label{q2}
  Does there exist a constant \(m_2=m_2(X)\) such that if \(D\) is a
  divisor with \(D^2 > 0\) and $H\cdot D>0$ for an ample divisor $H$,  then one has \(h^0(X,\cO_X(m_2 D)) > 0\)?
\end{question}
\begin{question}
\label{q3}
 Does there exist a constant \(m_3=m_3(X)\) such that if \(D\) is a
  divisor with \(D^2 \geq m_3\) and $H\cdot D>0$ for an ample divisor $H$, then one has \(h^0(X,\cO_X(D))
  > 0 \)?
\end{question}

Blow-ups of \(\P^2\) at \(r \geq 10\) very general points, having quite complicated nef cones, appear to
provide a fertile source of counterexamples, though in most cases we
will find it necessary to assume the Segre--Harbourne--Gimigliano--Hirschowitz (SHGH) conjecture (see~\S\ref{Sec:SHGH}) in order to
compute the dimensions of various linear systems. Under the
hypothesis that SHGH conjecture holds, we show that Questions~\ref{q2} and~\ref{q3} both have
negative answers (see~\S\ref{Sec:Pell}).  To the other extreme, Question~\ref{q2} has an affirmative answer if  the nef cone of $X$ is as simple as possible, i.e., it is rational polyhedral. This is shown in~\S\ref{sec:rp}. 

We have not been able to answer Question~\ref{q1}. In~\S\ref{sec:q1} we relate it to the following:

\begin{question}\label{q4}
  Does there exist a constant \(m_4=m_4(X)\) such
  that \[ \frac{ D \cdot K_X}{D^ 2}  < m_4\] for any effective
  divisor \(D\) on \(X\) with $D^ 2>0$?
\end{question}

If one can answer Question~\ref{q4} affirmatively for a surface $X$, then the same happens for Question~\ref{q1}. However the answer to Question~\ref{q4} is in general negative, so the problem arises to classify surfaces $X$ for which  Question~\ref{q4} has an affirmative answer.

Finally in~\S\ref{Sec:Harb} we discuss a conjecture by B. Harbourne which  also sits in this same circle of ideas. 

\subsection*{Conventions} We use standard notation and terminology. We will often use the same notation for divisors, divisors classes and line bundles, hoping that no confusion arises.  

\section{The Segre--Harbourne--Gimigliano--Hirschowitz conjecture}\label{Sec:SHGH}

  The \emph{virtual dimension} of the complete linear system  \(|L|\) on a smooth projective
  surface \(X\) is 
  \[\vdim(L) = \chi(L) - 1 =p_a(X)+ \frac{L \cdot
    (L-K_S)}{2}.\]  
    
 Let $f \colon X_r\to \mathbb P^ 2$ be the blow--up of the plane at $r$ \emph{very general} points, with exceptional divisors $E_1,\ldots, E_r$. 
We set $E=E_1+\cdots+ E_r$ and denote by $H$ the total transform of a line via $f$.
If \[D = dH -
 \sum_{i=1}^r m_i E_i=:(d; m_1,\ldots,m_r),\] then
\[
\vdim(D) = \frac{d(d+3)}{2} - \sum_{i=1}^r \frac{m_i(m_i+1)}{2}=\frac {D^ 2-K_{X_r}\cdot D}2
\]
and the \emph{expected dimension} of the linear system $|D|$ is
\[
\edim(D)=\max \{-1,\vdim(D)\}.
\]

We will assume $m_1\ge \ldots\ge m_r>0$ and we will use the SHGH conjecture in the following form due to Gimigliano (see~\cite{G}).

\begin{conjecture}[SHGH Conjecture]
  Suppose that \(D =(d;m_1,\ldots, m_r)\), with \(d > m_1 + m_2 + m_3\).
  Then \(\dim (D) = \edim (D)\).
\end{conjecture}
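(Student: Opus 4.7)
The SHGH conjecture is a well-known open problem, so what follows is a sketch of the most natural line of attack rather than a complete proof. The plan is to proceed by a double induction on $d$ and $r$, using a degeneration of $\P^2$ in the style of Ciliberto--Miranda. One degenerates $\P^2$ in a flat family to a reducible surface $X_0 = \P^2 \cup_L \mathbb{F}_1$, where $L$ is a line on one component identified with the $(-1)$-section of the Hirzebruch surface $\mathbb{F}_1$, and simultaneously specializes $k$ of the $r$ general points onto $\mathbb{F}_1$ while the remaining $r - k$ move onto the $\P^2$-component. Via Mayer--Vietoris, a section of the limit line bundle corresponds to a pair of sections on the two components matching along $L$, so upper semicontinuity of $h^0$ gives an upper bound on $h^0(X_r, \cO_{X_r}(D))$ in terms of dimensions of restricted systems on the two components (blown up at $r - k$ and $k$ points respectively). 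If both restricted systems are non-special of the expected dimension --- arranged inductively --- the resulting bound matches $\edim(D) + 1$, while Riemann--Roch combined with the vanishing $h^2 = 0$ (provable under the hypothesis by Serre duality, since $K_{X_r} - D$ is not effective in the relevant range) gives the matching lower bound.

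The hypothesis $d > m_1 + m_2 + m_3$ enters as the \emph{Cremona-stable} normal form: the standard quadratic Cremona transformation centered at the three heaviest points does not decrease $d$ in this range, so the obvious reductions no longer apply and the problem is genuinely nontrivial. Equivalently, the inequality rules out the most obvious sources of special behaviour --- the line through two base points, the conic through five, the cubic through eight --- that would otherwise force $h^1 > 0$ through multiple components in the base locus. One therefore expects that in this range the only residual obstruction would have to come from a hidden $(-1)$-curve appearing in the degenerate limit, and a substantial portion of the argument reduces to showing that the SHGH hypothesis prevents any such $(-1)$-curve from being forced into the base locus of either restricted system.

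The hard part, and the reason the conjecture has resisted proof for decades, is choosing the integer $k$ so that both restricted systems on $X_0$ again satisfy SHGH-type hypotheses after the multiplicities have been transferred. The system on $\mathbb{F}_1$ has bidegree depending on the matching along $L$, and the analogue of the three-highest inequality on this Hirzebruch surface may fail regardless of the choice of $k$. One is then forced either to introduce multi-component degenerations --- iterated blow-ups of $L$, or unions of more than two pieces --- or to strengthen the inductive hypothesis so that it covers the exceptional restricted systems as well. Making this combinatorial bookkeeping uniform enough for the induction to terminate is the central unresolved issue, and is precisely where the existing literature on SHGH stops outside of bounded-multiplicity or small-$r$ regimes.
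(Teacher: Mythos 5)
The statement you were asked to prove is the SHGH Conjecture itself, which the paper does not prove: it is stated as an open conjecture (attributed in this formulation to Gimigliano's thesis) and then \emph{assumed} in Propositions~\ref{prop:q2}, \ref{prop:q3} and~\ref{prop:H}. There is therefore no proof in the paper to compare against, and you were right to flag this at the outset rather than manufacture a spurious argument.

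Your sketch of the Ciliberto--Miranda degeneration strategy --- degenerating $\P^2$ to $\P^2 \cup_L \mathbb{F}_1$, distributing the points between the two components, matching sections along the double curve, and inducting --- is an accurate description of one of the main lines of attack in the literature, and your diagnosis of where it stalls (the restricted systems on the Hirzebruch component need not again satisfy the $d > m_1 + m_2 + m_3$ hypothesis, forcing either multi-component degenerations or a strengthened inductive statement whose bookkeeping has not been closed) is essentially the correct assessment of the state of the art. Two small remarks: the interpretation of $d > m_1 + m_2 + m_3$ as ruling out Cremona reductions is one standard reading, but the Gimigliano form of the conjecture is usually phrased as saying that systems in this \emph{standard form} are non-special, with the Cremona-reducible cases handled separately rather than excluded from the conjecture's scope; and the $h^2 = 0$ vanishing you invoke is indeed immediate from Serre duality since $K_{X_r} - D$ has negative intersection with $H$, so that part of your lower-bound argument is sound. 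In short: no gap beyond the one you already named, which is the open problem itself.
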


In what follows, we will mainly consider \emph{homogeneous} divisor classes on $X_r$, namely \[D = dH -mE =: (d;m^r).\]  

\section{Pell divisors and counterexamples to Questions~\ref{q2} and~\ref{q3} }\label{Sec:Pell}

Let \(X\) be the blow-up of \(\P^2\) at \(r\) very general points and consider homogeneous divisor classes $D=(d;m^ r)$, such that \(\vdim(D) = 0\). This condition boils down to the 
Pell-type equation
\begin{equation}
x^2 - ry^2 = 9-r. 
\end{equation}
where  \(x = 2d+3\) and \(y=2m+1\). 

In the case \(r=10\), the solution is particularly simple (the
case $r > 10$ not a square can be treated in a similar way, but we do not dwell on this here). 
Let $C_k=\frac {p_k}{q_k}$ be the \emph{$k$--th convergent} 
of the simple continued fraction of $\sqrt{10}=[3;\bar 6]$, with $(p_k,q_k)=1$. One has
$(p_0,q_0)=(3,1)$ and 
\begin{equation}\label{eq:ric}
\left(3 + \sqrt{10} \right)^{k+1}=p_k+q_k \sqrt{10} \text{, for all}\,\,\, k\in \mathbb N.
\end{equation}
The norm of  $3 + \sqrt{10}$ in $\mathbb Z[\sqrt{10}]$ being $-1$, the solutions of the equation
\[
x^2 - 10y^2 = -1 \quad [\text{resp.\ of}\,\,\,  x^2 - 10y^2 = 1]
\]
are 
\[
 x=p_{2k}, \,\, y=q_{2k}, \quad [\text{resp.}\,\,\,  x=p_{2k+1}, \,\, y=q_{2k+1}], \,\,\, \text{for all}\,\,\, k\in \mathbb N.
\]
It is easy to verify, using~\eqref{eq:ric}, that every such solution has both \(x\) and \(y\) odd.

Hence we have a 
sequence of divisor classes $D_k=(d_k; m_k^ {10})$ on $X_{10}$ with
\[
 d_k=\frac {p_{2k}-3}2 , \; m_k=\frac {q_{2k}-1}2 \text{, for all } k \in \mathbb N \text{, and } \vdim (D_k)=0,
\]
which we call \emph{Pell divisors}.  The properties of continued fractions  imply that $C_{2k}>\sqrt {10}>3$, which yields 
\[ d_k>3m_k,\,\,\, \text{for all}\,\,\,   k\in \mathbb N.\] 
Thus,  \emph{if SHGH conjecture holds}, we have 
\[ \dim(|D_k|) = 0  \,\,\, \text{for all}\,\,\, k\in \mathbb N. \]

\begin{lemma}[Divisibility Lemma]
  For  all positive $k\in \mathbb N$, one has 
\[ D_k= c_k F_k,\] where
  \begin{equation*}\label{eq:fn}
\begin{cases}
c_k = p_{k-1} \\
F_k = (p_k;q_k^ {10})
\end{cases}
\text{for \(k\) odd}, \qquad 
\begin{cases}
c_k = q_{k-1} \\
F_k = (10q_k;p_k^ {10})
\end{cases}
\text{for \(k\) even.}
\end{equation*}
\end{lemma}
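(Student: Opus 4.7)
The plan is to express the two coordinates of $D_k$ in terms of products of consecutive $p$'s and $q$'s, then identify the common factor. Throughout, I will use the relation \eqref{eq:ric}, which by multiplying both sides by $3+\sqrt{10}$ gives the recursion
\[
p_k = 3p_{k-1} + 10 q_{k-1}, \qquad q_k = p_{k-1} + 3 q_{k-1},
\]
and, by expanding $(3+\sqrt{10})^{2k+1} = (3+\sqrt{10})^{k}\cdot (3+\sqrt{10})^{k+1}$, the doubling formulas
\[
p_{2k} = p_{k-1}p_k + 10\, q_{k-1}q_k, \qquad q_{2k} = p_{k-1}q_k + q_{k-1}p_k.
\]
Since $d_k = (p_{2k}-3)/2$ and $m_k = (q_{2k}-1)/2$, these reduce the lemma to algebraic identities among $p_{k-1}, p_k, q_{k-1}, q_k$.

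The key step is to establish two identities:
\begin{itemize}
\item[(i)] $p_{k-1}q_k - p_k q_{k-1} = (-1)^k$, the classical convergent identity, proved by a straightforward induction from $p_0=3, q_0=1, p_1 = 19, q_1 = 6$ and the continued-fraction recursion.
\item[(ii)] $p_{k-1}p_k - 10\, q_{k-1}q_k = 3(-1)^k$, which I would obtain by substituting the above recursion for $p_k, q_k$ and using the norm identity $p_{k-1}^2 - 10\, q_{k-1}^2 = (-1)^k$ (coming from $N(3+\sqrt{10})=-1$ and $p_{k-1}+q_{k-1}\sqrt{10} = (3+\sqrt{10})^k$).
\end{itemize}

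Given (i) and (ii), the lemma follows by a case distinction on the parity of $k$. For $k$ odd, (ii) yields $10\,q_{k-1}q_k = p_{k-1}p_k + 3$, whence
\[
p_{2k} - 3 = p_{k-1}p_k + 10\,q_{k-1}q_k - 3 = 2\, p_{k-1}p_k,
\]
so $d_k = p_{k-1}p_k$; and (i) gives $q_{k-1}p_k = p_{k-1}q_k + 1$, so
\[
q_{2k}-1 = p_{k-1}q_k + q_{k-1}p_k - 1 = 2\, p_{k-1}q_k,
\]
giving $m_k = p_{k-1}q_k$. Hence $D_k = p_{k-1}(p_k; q_k^{10}) = c_k F_k$. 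For $k$ even, the signs in (i) and (ii) both flip: (ii) gives $p_{k-1}p_k = 10\,q_{k-1}q_k + 3$, so $d_k = 10\,q_{k-1}q_k$, and (i) gives $p_{k-1}q_k = q_{k-1}p_k + 1$, so $m_k = q_{k-1}p_k$; therefore $D_k = q_{k-1}(10 q_k; p_k^{10}) = c_k F_k$.

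The only non-routine step is identity (ii); once it is in place, everything else is bookkeeping with the two doubling formulas. I would expect to have to be slightly careful that the identities (i) and (ii) hold starting from $k=1$, which is easily checked directly from $(p_{-1}, q_{-1}) = (1, 0)$ and $(p_0, q_0) = (3, 1)$.
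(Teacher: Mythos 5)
Your proof is correct, and it supplies exactly the kind of ``straightforward calculation'' that the paper leaves to the reader (the paper's own proof is a single sentence asserting this). Your decomposition via the doubling formulas $p_{2k}=p_{k-1}p_k+10q_{k-1}q_k$, $q_{2k}=p_{k-1}q_k+q_{k-1}p_k$, combined with the determinant identity (i) and the identity (ii) derived from the norm $p_{k-1}^2-10q_{k-1}^2=(-1)^k$, cleanly yields $d_k=p_{k-1}p_k$, $m_k=p_{k-1}q_k$ for odd $k$ and $d_k=10q_{k-1}q_k$, $m_k=q_{k-1}p_k$ for even $k$, matching the table.
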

\begin{proof}
It may be verified with straightforward calculation.
\end{proof}

The coefficients can be calculated directly using the recurrence
\begin{align*}
d_{k+1} = 19d_k + 60m_k + 57 \\
m_{k+1} = 6d_k + 19m_k + 18,
\end{align*}
with \(d_0 = m_0 = 0\) and $k\in \mathbb N-\{0\}$. The first few such values are listed below.
\[
\begin{array}{|r|r|r|r|r|} 
\hline
k & (p_k,q_k) & D_k & c_k & F_k \\
\hline \hline
0 & (3,1) & (0;0^ {10}) & & \\\hline
1 & (19,6) & (57;18^ {10}) & 3 & (19;6^ {10}) \\\hline
2 & (117,37) & (2220;702^ {10}) & 6 & (370;117^ {10}) \\\hline
3 & (721,228) & (84357;26676^ {10}) & 117 & (721;228^ {10}) \\\hline
4 & (4443,1405) & (3203400;1013004^ {10}) & 228 & (14050;4443^ {10}) 
\\
\hline
\end{array}
\]

\begin{lemma}\label{lem:vdim}
  For every \(h <
  c_k\), one has \(\vdim(hF_k) < 0\).
\end{lemma}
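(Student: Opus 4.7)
The plan is to reduce the claim to a direct calculation exploiting the Pell structure of the convergents. Observe that
\[
\vdim(hF_k) \;=\; \frac{h^{2} F_k^{2} - h K_{X_{10}} \cdot F_k}{2}
\]
is a quadratic polynomial in $h$ vanishing at $h=0$, so the statement reduces to showing that its other root equals $c_k$ and that the leading coefficient is positive.

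I would first evaluate $F_k^{2}$ and $K_{X_{10}} \cdot F_k$. From the Pell equation $p_k^{2} - 10 q_k^{2} = (-1)^{k+1}$ one finds $F_k^{2} = p_k^{2} - 10 q_k^{2} = 1$ for $k$ odd and $F_k^{2} = 100 q_k^{2} - 10 p_k^{2} = -10(p_k^{2} - 10 q_k^{2}) = 10$ for $k$ even. Since the anticanonical class on $X_{10}$ is $3H - E$, similarly $-K_{X_{10}} \cdot F_k = 3 p_k - 10 q_k$ when $k$ is odd and $-K_{X_{10}} \cdot F_k = 10(3 q_k - p_k)$ when $k$ is even.

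The key step, and I expect the only substantive content of the argument, is to establish the pair of identities
\[
3 p_k - 10 q_k = -p_{k-1}, \qquad 3 q_k - p_k = -q_{k-1} \qquad (k \geq 1).
\]
These should follow from \eqref{eq:ric} by multiplying both sides by $3 - \sqrt{10}$. On the one hand, $(3 - \sqrt{10})(3 + \sqrt{10}) = -1$, so the left-hand side becomes $-(3 + \sqrt{10})^{k} = -(p_{k-1} + q_{k-1} \sqrt{10})$; on the other, expanding $(3 - \sqrt{10})(p_k + q_k \sqrt{10})$ yields $(3 p_k - 10 q_k) + (3 q_k - p_k)\sqrt{10}$. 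Equating rational and irrational parts gives both identities simultaneously.

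Substituting back, for $k$ odd we obtain $\vdim(hF_k) = (h^{2} - h p_{k-1})/2 = h(h - c_k)/2$, and for $k$ even $\vdim(hF_k) = (10 h^{2} - 10 h q_{k-1})/2 = 5 h(h - c_k)$. In either parity the expression is strictly negative on the range $0 < h < c_k$, completing the proof.
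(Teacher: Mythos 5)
Your proof is correct and follows essentially the same route as the paper: both compute $\vdim(hF_k)$ as the quadratic $\tfrac{1}{2}(h^2 F_k^2 - h\,K\cdot F_k)$ in $h$, use the Pell relation $p_k^2-10q_k^2=(-1)^{k+1}$ and the convergent identity (which the paper also derives from \eqref{eq:ric}) to factor it as $\tfrac{h}{2}(h-c_k)$ (resp.\ $5h(h-c_k)$), and conclude. Your presentation is slightly more explicit about the identities $3p_k-10q_k=-p_{k-1}$ and $3q_k-p_k=-q_{k-1}$ and about the even case, but the substance is identical.
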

\begin{proof}
For odd \(k\), by taking into account~\eqref{eq:ric},  one has
\begin{align*}
  \vdim(h F_k) &= \frac{(h p_k)(h p_k + 3)}{2} - 10 \frac{(h q_k)(h q_k+1)}{2} \\ 
&= \frac{h^2}{2} \left( p_k^2 - 10q_k^2 \right) + \frac{h}{2} \left( 3p_k - 10q_k \right) \\
&= \frac{h}{2}(h-p_{k-1}).
\end{align*}
This is \(0\) in the case \(h = c_k = p_{k-1}\), and negative if \(h <
p_{k-1}\).
The calculation for even \(n\) is analogous.
\end{proof}

\begin{prop}\label{prop:q2}  If the SHGH conjecture holds, then
  the divisors \(F_k\), for \(k\) any odd positive integer, provide a negative answer to
  Question~\ref{q2}.
\end{prop}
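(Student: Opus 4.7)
The plan is to turn the three preceding ingredients (the Divisibility Lemma, Lemma~\ref{lem:vdim}, and the SHGH conjecture) into an explicit sequence of counterexamples to any prospective bound $m_2 = m_2(X_{10})$. Concretely, for each odd $k \geq 1$ I will show that $F_k = (p_k;q_k^{10})$ has $F_k^2 > 0$ and positive intersection with every ample divisor, yet $h^0(hF_k) = 0$ for every $h < c_k = p_{k-1}$. Since $p_{k-1} \to \infty$ along odd $k$, no uniform $m_2$ can work.

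First I would verify the positivity hypotheses. Taking norms in $\mathbb Z[\sqrt{10}]$ of the identity $p_k + q_k\sqrt{10} = (3+\sqrt{10})^{k+1}$ gives $p_k^2 - 10 q_k^2 = (-1)^{k+1}$, which equals $+1$ for odd $k$; hence $F_k^2 = p_k^2 - 10 q_k^2 = 1 > 0$. For the ample intersection, observe that $D_k = c_k F_k$ by the Divisibility Lemma, and $D_k$ is effective because (assuming SHGH) $\dim|D_k| = \vdim(D_k) = 0$. Since $c_k$ is a positive integer, $F_k$ is a positive rational multiple of an effective class, so $A \cdot F_k > 0$ for any ample divisor $A$ on $X_{10}$.

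Next I would apply SHGH to the scaled class $hF_k = (hp_k;(hq_k)^{10})$ for $1 \leq h < c_k$. The hypothesis $d > m_1+m_2+m_3$ of the conjecture becomes $hp_k > 3hq_k$, i.e.\ $p_k/q_k > 3$; this holds for every odd $k \geq 1$ since the convergents $C_k$ of $\sqrt{10}$ with odd index lie above $\sqrt{10} > 3$. Therefore SHGH gives
\[
\dim |hF_k| = \edim(hF_k) = \max\{-1, \vdim(hF_k)\},
\]
and Lemma~\ref{lem:vdim} says the right-hand side equals $-1$ whenever $h < c_k$. In particular $h^0(X_{10}, \cO_{X_{10}}(hF_k)) = 0$ for all such $h$.

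To finish, suppose by contradiction that a constant $m_2 = m_2(X_{10})$ as in Question~\ref{q2} exists. Choose an odd $k$ with $c_k = p_{k-1} > m_2$; this is possible because $p_{k-1}$ is strictly increasing and unbounded along the odd integers. Then $D := F_k$ satisfies $D^2 = 1 > 0$ and $A \cdot D > 0$ for every ample $A$, but $h^0(m_2 D) = 0$, contradicting the choice of $m_2$. The main step to watch is the applicability of SHGH in the third paragraph, namely the inequality $p_k > 3q_k$; once that is justified, everything else is a direct bookkeeping of the identities already established.
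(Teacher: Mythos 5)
Your proof is correct and follows essentially the same line as the paper's: use Lemma~\ref{lem:vdim} together with SHGH to conclude $|hF_k|$ is empty for $h<c_k$, and let $c_k\to\infty$. You are somewhat more careful than the paper in spelling out (i) why $F_k$ has positive intersection with any ample class, and (ii) why the SHGH hypothesis $d>m_1+m_2+m_3$ is satisfied for $hF_k$ (via $p_k/q_k>\sqrt{10}>3$ for odd $k$, which is the correct fact about odd-index convergents); the paper leaves these implicit. One small remark: the effectiveness of $D_k=c_kF_k$ does not actually require SHGH — since $\vdim(D_k)=0$ and $X_{10}$ is rational with $h^2(D_k)=0$, Riemann--Roch already gives $h^0(D_k)\ge 1$ — but invoking SHGH there does no harm since the whole proposition is conditional on it anyway.
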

\begin{proof}
  If \(k\) is odd, then \(F_k^2 = p_k^2 - 10q_k^2 = 1\).  Moreover, by Lemma~\ref{lem:vdim}, one has  \(\vdim(hF_k) < 0\) for \(0 \leq h < c_k\) and so $|hF_k|$ is empty.  On the
  other hand, \(\vdim(c_k F_k) = 0\) and so \(|D_k|\) is
  effective. As 
$c_k$ attains arbitrarily large values, this provides a negative answer to Question~\ref{q2}.
\end{proof}

\begin{remark}
  The \(k=1\) case is not subject to SHGH conjecture: by~\cite{CM}, the divisor \((57;18^{10})\) is effective, but 
  \((19;6^{10})\) and \((38;12^{10})\) are not.
\end{remark}

\begin{prop}\label{prop:q3} If SHGH conjecture holds, then
the divisors \((c_k-1) F_k\), for \(k\) any odd positive integer,  give a counterexample to Question~\ref{q3}.
\end{prop}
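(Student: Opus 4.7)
The plan is to mimic the proof of Proposition~\ref{prop:q2} but to shift from $c_kF_k$ to $(c_k-1)F_k$, exploiting that for $k$ odd we have $F_k^2=1$, so multiples of $F_k$ have self-intersection growing like the square of the multiplier while the virtual dimension remains non-positive.

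First, I would check that the SHGH conjecture applies to $(c_k-1)F_k$. For $k$ odd, $F_k=(p_k;q_k^{10})$ and the $2k$-th convergent property gives $p_k/q_k>\sqrt{10}>3$, so $(c_k-1)p_k>3(c_k-1)q_k$, and the hypothesis $d>m_1+m_2+m_3$ of the SHGH conjecture is satisfied. Next, I would invoke Lemma~\ref{lem:vdim} with $h=c_k-1<c_k$ to conclude that $\vdim((c_k-1)F_k)<0$. Combining these, under SHGH we have
\[
\dim |(c_k-1)F_k|=\edim((c_k-1)F_k)=-1,
\]
so $h^0(X_{10},\cO_{X_{10}}((c_k-1)F_k))=0$.

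Second, I would verify the positivity hypotheses required by Question~\ref{q3}. Since $F_k^2=p_k^2-10q_k^2=1$ for $k$ odd, we have
\[
\bigl((c_k-1)F_k\bigr)^2=(c_k-1)^2,
\]
which grows without bound as $k\to\infty$. For the ample-intersection condition, I would exploit the fact, established in the proof of Proposition~\ref{prop:q2}, that $D_k=c_kF_k$ is effective: for any ample divisor $H$ on $X_{10}$ one then has $H\cdot D_k>0$, hence $H\cdot F_k>0$, and therefore $H\cdot (c_k-1)F_k>0$.

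Putting these pieces together, the classes $(c_k-1)F_k$ with $k$ odd satisfy $H\cdot D>0$ and $D^2=(c_k-1)^2$ arbitrarily large, yet they are not effective; this contradicts the existence of a constant $m_3=m_3(X_{10})$ as in Question~\ref{q3}. The only real subtlety is confirming applicability of SHGH, which follows because multiplication by $c_k-1$ preserves the ratio $p_k/q_k>3$; the rest is arithmetic on the Pell solutions already recorded in Lemma~\ref{lem:vdim}.
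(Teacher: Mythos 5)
Your proof is correct and follows essentially the same approach as the paper's, which simply invokes ``the same calculation as above'' (i.e.\ the one from Proposition~\ref{prop:q2} together with Lemma~\ref{lem:vdim}) to get $((c_k-1)F_k)^2 = (c_k-1)^2$ arbitrarily large while $\vdim((c_k-1)F_k)<0$. You spell out two details the paper leaves implicit---applicability of SHGH to $(c_k-1)F_k$ and the verification that $H\cdot(c_k-1)F_k>0$ via effectivity of $D_k=c_kF_k$---both of which are sound; your only slip is the passing reference to ``the $2k$-th convergent property,'' when what is actually used is that the $k$-th convergent $p_k/q_k$ exceeds $\sqrt{10}>3$ because $k$ is odd.
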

\begin{proof}
  The same calculation as above shows that \(((c_k-1)F_k)^2 =
  (c_k-1)^2\), which can be arbitrarily large, but
  \(\vdim(X,\cO_X((c_k-1)F_k)) < 0\).
\end{proof}

\begin{remark}
  Whenever we use the SHGH conjecture in this section, we really use it for 10 points. Thus, in Propositions~\ref{prop:q2} and~\ref{prop:q3} 
we could assume that the SHGH conjecture holds for 10 points. \end{remark}

\section{Question~\ref{q2}  when the nef cone is rational polyhedral}\label{sec:rp}

In this section, for a smooth, irreducible, projective surface $X$ we use the (by now standard) notation from~\cite{L}: for instance, ${\rm NS}(X):=\textup{N}^1(X)$ is the {\emph{N\'eron--Severi group}}, ${\rm N}^ 1(X)_{\mathbb R} := \textup{N}^1(X) \otimes \mathbb{R}$ is the \emph{N\'eron--Severi space}, ${\rm Nef}(X)$ is the {\emph{nef cone}}, ${\rm Big}(X)$ is the {\emph{big cone}}, $\overline {\rm{NE}}(X)$ is the {\emph{pseudo--effective cone}}, and so on.

In the paper~\cite{KL}, the authors prove the following result.

\begin{theorem} {\emph{(\cite[Theorem~4.10]{KL})}}.
Let $X$ be a smooth, irreducible, projective surface, let $D$ be a divisor on $X$ and assume that 
the \emph{nef cone} (or, dually, the \emph{pseudo--effective cone}) of the surface $X$ is rational polyhedral.
Question~\ref{q2} restricted to {\emph{ample}} divisors $D$ on $X$ admits an affirmative answer.
\end{theorem}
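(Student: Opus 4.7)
The plan is to combine the rational polyhedral hypothesis with Gordan's lemma and a uniform effectivity statement for extremal rays. On a surface the intersection pairing makes ${\rm Nef}(X)$ and $\NEb(X) = \Effb(X)$ mutually dual cones in ${\rm N}^1(X)_{\R}$, and rational polyhedrality is preserved by duality, so $\NEb(X)$ is also rational polyhedral. Let $v_1,\ldots,v_s \in {\rm NS}(X)$ be the primitive integral generators of its extremal rays.

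The key sublemma is that each $v_i$ admits a positive integer $n_i$ such that $n_i v_i$ is effective. First, every extremal ray of $\NEb(X)$ satisfies $v_i^2 \leq 0$, since a class of positive self-intersection on a surface is big and hence in the interior of $\NEb(X)$. For rays with $v_i^2 < 0$ the sublemma is classical: the ray contains the class of a unique irreducible negative curve, whose class is a positive integer multiple of $v_i$. For isotropic rays ($v_i^2 = 0$), which lie in the common boundary of the nef and pseudo-effective cones, one invokes Zariski decomposition together with Riemann--Roch applied to $k v_i$ for $k\gg 0$ to produce an effective multiple.

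Next I would apply Gordan's lemma to the rational polyhedral cone $\NEb(X)$ and the lattice ${\rm NS}(X)$ to obtain a Hilbert basis $h_1,\ldots,h_t$ of the finitely generated monoid $\NEb(X) \cap {\rm NS}(X)$. Each $h_j$ is a nonnegative rational combination of the $v_i$, and hence, using the sublemma and clearing the (finitely many) denominators that appear, some positive integer multiple of each $h_j$ is effective. Let $M$ be a common positive integer such that $M h_j$ is effective for every $j = 1,\ldots,t$.

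Finally, any ample divisor $D$ is integral and lies in the interior of $\NEb(X)$, so $[D] \in \NEb(X) \cap {\rm NS}(X)$, and we may write $[D] = \sum_j k_j h_j$ with $k_j \in \Z_{\geq 0}$. Then $MD = \sum_j k_j(M h_j)$ is a nonnegative integer combination of effective classes, hence effective. Setting $m_2(X) := M$, which depends only on $X$, answers Question~\ref{q2} affirmatively for ample divisors.

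The main obstacle is the sublemma on extremal rays. The negative-self-intersection case is standard; the isotropic case is the delicate one and is where the surface-specific geometry, in the form of Zariski decomposition and the fibrations that an isotropic nef class typically induces on a surface with rational polyhedral cones, genuinely enters the argument.
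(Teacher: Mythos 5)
Your approach is genuinely different from the paper's. The paper (following \cite{KL}) proceeds in two steps: first, using the Anghern--Siu-type effectivity theorem of \cite[Theorem~2.20]{ELMNP} (or, in \cite{KL} itself, Fujita vanishing), it produces a single ample class $R$ such that every integral class in $R+\textup{Big}(X)$ (respectively $R+\textup{Nef}(X)$) is effective; second, diophantine approximation on the rational polyhedral cone shows that a uniform multiple of any big (respectively ample) integral class lands in that translate. You instead try to deduce everything from the combinatorics of the cone via Gordan's lemma, reducing to effectivity of multiples of Hilbert basis elements and, ultimately, of primitive generators of extremal rays.

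The gap is in your sublemma for isotropic extremal rays, and it is not a small one. You correctly observe that an extremal ray generator $v$ of $\NEb(X)$ with $v^2=0$ must be nef (any negative part of its Zariski decomposition would contradict extremality), so you are asking: does a rational nef class $v$ with $v^2=0$ always have an effective multiple? Riemann--Roch alone does not give this. Writing $\chi(kv)=\chi(\cO_X)-\tfrac{k}{2}\,v\cdot K_X$, you only get $h^0(kv)\to\infty$ when $v\cdot K_X<0$; if $v\cdot K_X\geq 0$, Riemann--Roch is silent. Zariski decomposition gives nothing further, since $v$ being nef means $P(v)=v$ and $N(v)=0$, so there is no ``effective part'' to extract. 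What you are implicitly invoking is an abundance/semiampleness statement for nef classes of numerical dimension one on the surfaces under consideration, and that is precisely the kind of delicate geometric input the paper's proof is designed to avoid: the Anghern--Siu theorem gives effectivity of $K_X+A+D$ cohomologically, with no need for any curve or fibration structure on boundary classes. Relatedly, note that your argument, if it went through, would prove the stronger claim that $M\xi$ is effective for \emph{every} integral pseudoeffective class $\xi$, not just ample ones; that extra strength is another signal that a nontrivial hypothesis is being smuggled in at the isotropic-ray step. To complete your route you would need either to prove the sublemma for surfaces with rational polyhedral pseudoeffective cone, or to find a way to express an ample integral class as a nonnegative integral combination of \emph{big} lattice points only -- neither of which is done here.

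Two smaller remarks. First, the reduction from Hilbert basis elements to extremal-ray generators by ``clearing denominators'' works, but be explicit that you take $M$ to be a common multiple of the products $n_i q_i$, where $q_i$ are the denominators in the finitely many representations $h_j=\sum_i (p_i/q_i)v_i$. Second, your opening sentence that a pseudoeffective class with positive self-intersection is big is correct, but the reason is not merely $v^2>0$ (that alone only forces $v$ or $-v$ big); it is $v^2>0$ together with pseudoeffectivity.
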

The proof is based on Fujita's vanishing theorem and diophatine approximation.


Following the ideas from~\cite{KL} and using a local version of a famous theorem of Anghern--Siu about the generation of adjoint line bundles as in~\cite{ELMNP}, in this section we extend~\cite[Theorem~4.10]{KL}.  Indeed, we give an affirmative answer to Question~\ref{q2}, whenever the nef cone of the surface is rational polyhedral and $D$ is big (in particular, our result applies without restriction to the divisors appearing in Question~\ref{q2}).

Before stating and proving this result, we recall some notation. If $D$ is a pseudoeffective divisor on $X$, one has the \emph{Zariski decomposition} $D=P(D) + N(D)$, where $P(D)$ is the \emph{nef part} of $D$ and $N(D)$ is the \emph {negative part} of $D$. If $D$ is a big divisor on $X$, one defines
${\rm Null}(D)$ to be the divisor (containing $N(D)$) given by the sum of all irreducible curves $E$ on $X$ such that ${\rm P}(D)\cdot E=0$. 

\begin{theorem}\label{thm:rational}
Let $X$ be a smooth, irreducible, projective surface with rational polyhedral nef cone (or pseudo--effective cone). Then there exists an integer $m:=m(X)>0$ such that for any big divisor $D$ on $X$ one has $h^0(X, \mathcal O_X(mD))>0$.
\end{theorem}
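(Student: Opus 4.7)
The plan is to reduce, via the Zariski decomposition, to producing an effective multiple of a big and nef divisor, and then to exploit the finite combinatorial structure from rational polyhedrality together with a local version of the Angehrn--Siu theorem as in~\cite{ELMNP}.

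First, given $D$ big on $X$, one writes the Zariski decomposition $D=P(D)+N(D)$ with $P:=P(D)$ big and nef (as a $\mathbb Q$-divisor) and $N(D)\ge 0$. For any integer $m$ for which $mP$ and $mN(D)$ are integral,
\[
h^0(X,\mathcal O_X(mD)) \ \ge \ h^0(X,\mathcal O_X(mP)),
\]
since $mN(D)$ is effective. The pseudo-effective cone being rational polyhedral, its finitely many extremal rays are generated by negative curves $E_1,\dots,E_s$; only finitely many intersection matrices among subsets of the $E_i$ can arise as supports of $N(D)$, so the denominators of $P(D)$ are bounded by an integer $M=M(X)$ depending only on $X$. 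It therefore suffices to produce a uniform $k=k(X)$ with $h^0(X,\mathcal O_X(k(MP)))>0$ for every big and nef integral $MP$ arising this way.

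Next, I would exploit rational polyhedrality of ${\rm Nef}(X)$ to organize the nef parts: the cone has finitely many faces $F_1,\dots,F_N$, and $P$ lies in the relative interior of exactly one $F_i$. By Nakamaye's theorem, $\mathbf B_+(P) = {\rm Null}(P)$ coincides with a fixed curve $\Gamma_i := \bigcup\{E \text{ irreducible}: E\cdot F_i=0\}$ depending only on $F_i$. For each face $F_i$ meeting ${\rm Big}(X)$ I would fix once and for all a point $x_i\in X\setminus\Gamma_i$. The local Angehrn--Siu statement from~\cite{ELMNP} then ensures that for $P$ in the relative interior of $F_i$ and $k$ sufficiently large, $\mathcal O_X(K_X+kP)$ admits a section not vanishing at $x_i$. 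Combining the denominator bound of the previous step with the fact that the direction of $P$ in $F_i$ can be normalized to lie in a compact slice of the face, one extracts a uniform $k_i=k_i(X)$, and sets $k(X)=\max_i k_i$. Finally, to remove $K_X$, one invokes rational polyhedrality of $\overline{\rm NE}(X)$ once more to find a uniform $\ell=\ell(X)$ for which $\ell P - K_X$ is pseudoeffective, and effective after clearing a further uniform integer denominator, for every big nef $P=P(D)$. Then $(k+\ell)P=(K_X+kP)+(\ell P-K_X)$ is effective, and absorbing the denominator $M$ produces $m=m(X)$.

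The main obstacle is the uniformity of the Angehrn--Siu bound. As $P$ approaches the boundary of its face $F_i$, the positivity of $P$ at $x_i$ — which governs the admissible $k$ in Angehrn--Siu — can degenerate, so a naive application yields a constant depending on $P$. The essential role of rational polyhedrality is to replace this continuous family by the finitely many faces $F_i$, on each of which a compactness-and-scaling argument, together with lattice-theoretic control of the denominators of $P(D)$, should yield the required uniform bound. Making this uniformity precise, and verifying that $x_i$ can indeed be chosen uniformly away from $\Gamma_i$ for every face $F_i$ meeting the big cone, constitutes the technical heart of the argument.
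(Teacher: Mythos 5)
Your proposal takes a genuinely different route from the paper's, and contains a gap at the point where you try to remove $K_X$.

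The paper does not pass through the Zariski decomposition of $D$ at all, nor does it scale the nef part. Instead, it fixes once and for all a point $x$ away from the (finitely many) negative curves of $X$, fixes a single ample $A$ with $A^2>9$, $A\cdot C>3$ for every irreducible $C$ through $x$, and $K_X+A$ ample, and then applies the local Angehrn--Siu statement of~\cite{ELMNP} to $B:=A+D$ rather than to a multiple of $P(D)$. Since the negative part of $B$ is supported on negative curves, hence avoids $x$, the volume inequalities $\textup{vol}_X(B)>9$ and $\textup{vol}_C(B)>3$ are immediate from $A$, and one concludes that $R+D$ is effective for the \emph{fixed} $R=K_X+A$ and \emph{every} big integral $D$. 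The diophantine step (integral interior points of a rational polyhedral cone are at distance $\geq c>0$ from the boundary) then produces a uniform $m$ with $mD\in R+\overline{\textup{NE}}(X)$, finishing the proof. Translating the big cone by $R$ elegantly sidesteps both the face-by-face analysis of $\textup{Nef}(X)$ and the need to ever cancel $K_X$.

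Your route has two problems. First, the compactness-and-scaling discussion is unnecessary: once you know $MP$ is an \emph{integral} big and nef class, you automatically have $(MP)^2\geq 1$ and $(MP)\cdot C\geq 1$ for every $C$ through $x_i\notin\textup{Null}(P)$, so $k=4$ works in Angehrn--Siu uniformly, with no compact slice argument. The real issue is the last step. You assert that $\ell P-K_X$ is pseudoeffective for a uniform $\ell$ (true, by the same diophantine estimate the paper uses) ``and effective after clearing a further uniform integer denominator.'' This does not follow: an integral pseudoeffective class need not be effective, and even making $\ell P - K_X$ big does not by itself give a \emph{uniform} multiple that is effective --- that is precisely the statement of the theorem you are proving, so the argument as written is circular. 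To close this gap you would need something like Step~1 of the paper (an effective translate of the big cone), at which point the Zariski-decomposition preprocessing becomes superfluous.
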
 

\begin{remark}
Theorem~\ref{thm:rational} explains the need we had in~\S\ref{Sec:Pell} to take a sequence of divisor classes whose limit is an irrational class in ${\rm N}^ 1(X)_{\mathbb R}$. In order to find surfaces $X$ which are counterexamples to an affirmative answer to Question~\ref{q2} one needs the pseudo--effective cone of $X$ to be complicated, and it is well known that the blow--up of the projective plane at $10$ or more very general points is such that $\textup{Nef}(X)$ is far from being rational polyhedral. 
\end{remark}

\begin{proof}[Proof of Theorem~\ref{thm:rational}]
The proof consists in two steps:
\begin{itemize}
\item
first, we show that there exists a translate of the big cone in $\textup{N}^1(X)_{\R}$ such that any divisor class simultaneously in this translate and in the N\'eron--Severi group is effective;
\item
second, we use diophantine approximation to deduce the theorem.
\end{itemize}

\noindent
\textit{Step 1:} \emph {there exists an ample divisor $R\in \textup{N}^1(X)$, such that}
\begin{equation}\label{eq:transl}
\forall \ D\in \big(R+\textup{Big}(X)\big) \ \bigcap \ \textup{N}^1(X) \quad {\textrm{it follows that}} \quad h^0(X, \cO_X(D))>0 \ .
\end{equation}

The condition that the nef cone is rational polyhedral implies that there exist only finitely many \emph{negative curves} on $X$, i.e., irreducible curves $E$ such that $E^ 2<0$; denote by $E_1,\ldots,E_h$ these negative curves.  The negative part of any pseudoeffective divisor on $X$ is of the form
$\sum_{i=1}^ ha_iE_i$, with rational numbers $a_i\geq 0$, for $i\in \{1,\ldots, h\}$. 

 We choose a point $x\in X$ that does not sit on $E_1 \cup \cdots \cup E_h$ and we choose an ample divisor $A\in \textup{N}^1(X)$ such that:\\ \begin{inparaenum} \item [(a)] $A^2>9$;\\
 \item [(b)] $A\cdot C>3$ for any irreducible curve $C\subset X$ passing through the point $x$;\\
 \item [(c)] the adjoint line bundle $K_X+A$ is ample.\end{inparaenum} 
 
We will prove that $R:= K_X+A$ verifies~\eqref{eq:transl}.
In order to do so, we will use~\cite[Theorem~2.20]{ELMNP}, which is a generalization to the big case of a theorem of Anghern and Siu. In the case of surfaces, it says that for a big divisor $B\in \textup{N}^1(X)$, and a point $x\notin \textup{Null}(B)$ such that 
\begin{equation}\label{eq:as}
\textup{vol}_X(B) > 9 \ \textup{ and } \ \textup{vol}_C(B) > 3 \ \textup{ for any curve } \ C\subset X \ \textup{ passing through } x ,
\end{equation}
then $x$ is not in the base locus of $K_X+B$; in particular, the divisor $K_X+B$ is effective.

In the case of surfaces the volumes appearing in~\eqref{eq:as} are easy to compute from the Zariski decomposition $B=P(B)+N(B)$, as explained in~\cite[Example~2.19]{ELMNP}. If  $C$ is not contained in $N(B)$ (which is the case, since $x\in C$ and $x\notin N(B)$), then 
\[
\textup{vol}_X(B)=\textup{vol}_X(P(B)) \quad \textup{ and } \quad \textup{vol}_C(B)=\textup{vol}_C(P(B)) .
\]

Going back to our setup, let $D$ be any big divisor on $X$ and set $B=A+D$. We prove that~\eqref{eq:as} holds, so that we may apply to $B$ the aforementioned~\cite[Theorem~2.20]{ELMNP}.
Indeed,  $N(D)-N(B)=P(B)-A-P(D)$ is effective (see~\cite[Lemmas~14.8 and~14.10]{Ba}). Hence
\[
\textup{vol}_X(B)=\textup{vol}_X(P(B))\geq \textup{vol}_X(A+P(D)) =(A+P(D))^2>9 , 
\]
proving the first part of~\eqref{eq:as}. As for the second part, note that $N(D)-N(B)=P(B)-A-P(D)$ consists of negative curves, thus not passing through  $x$. Hence, 
no irreducible curve $C$ passing through $x$ is contained in $P(B)-A-P(D)$, thus 
\[
\textup{vol}_C(B)=\textup{vol}_C(P(B))\geq \textup{vol}_C(A+P(D)) =(A+P(D))\cdot C>3 .
\]
In conclusion, $K_X+B=K_X+A+D$ is effective for any big divisor $D$ on $X$, thus proving~\eqref{eq:transl} with $R=K_X+A$. \medskip

Before turning to Step 2, we notice that, by substituting $R$ with its sum with  an ample divisor one has 
\begin{equation}\label{eq:nn}
\forall \ D\in \big(R+\overline {\rm{NE}}(X)\big) \ \bigcap \ \textup{N}^1(X) \quad {\textrm{it follows that}} \quad h^0(X, \cO_X(D))>0 .
\end{equation}

\noindent
\textit{Step 2.} By taking~\eqref{eq:nn} into account, in order to accomplish
 the proof it suffices to apply to ${\rm{NE}}(X)$  the following property of rational polyhedral convex cones:  \emph{Let $\mathcal{C}\subset \R^n$ be a rational polyhedral convex cone and let $R\in \textup{int}(\mathcal{C})\cap \Z^n$; then there exists a natural number $m>0$ such that}
\[
\forall \ \xi\in \textup{int}(\mathcal{C}) \ \bigcap \ \Z^n \quad {\textrm{it follows that}} \quad m\xi\in R+\mathcal{C} .
\]

This result is shown in~\cite[Theorem~4.10]{KL}.  For the benefit of the reader, we include here the proof: this is the step where diophantine approximation comes into play.

Let $H\subset \R^n$ be an integral hyperplane through the origin, i.e., there exists a 
$u\in\Z^n$ such that $H=\{x\in\R^{n}| \langle x,u \rangle = 0\}$. The distance of $P\in \R^n$ from  $H$ is
\[
\textup{distance}(P,H) = \frac{|\langle P,u\rangle|}{||u||}\ .
\]
If  $P\in\Z^n$ and $P\notin H$, then $|\langle P,u\rangle |\geq 1$, hence $\textup{distance}(P,H) \geq  1/||u||$. 
Therefore, there is a constant $c>0$ such that $\textup{distance}(P,H)\geq c$ for any integral point $P\notin H$.

Since $\mathcal{C} \subset \R^n$ is rational polyhedral, it follows that the supporting hyperplanes of each face are  integral. Thus there exists a constant $c>0$ such that 
\begin{equation}\label{eq:diof}
\textup{distance}(P,\partial \mathcal{C}) \geq c \textup{, for any } P \in \textup{int} (\mathcal{C}) \cap \Z^n ,
\end{equation}
where $\partial\mathcal{C}$ denotes  the boundary of  $\mathcal{C}$ in $\R^n$.

Pick  $P\in\textup{int}(\mathcal{C})$, and let $\Lambda$ be the plane determined  by the origin $O\in \R^n$, $R$ and $P$. 
Let $\mathcal{C}_{\Lambda}=\mathcal{C}\cap \Lambda$. This is a cone in $\R^2$ with edges two half lines $\ell_i=\R_+v_i$  generated by the vectors $v_i$, for $i=1,2$.  Since  $\partial\mathcal{C}$ is supported by rational hyperplanes
and  $\Lambda$ is also defined by equations with rational coefficients, we may assume that $v_1, v_2$ are rational.

Furthermore,  $(R+\mathcal{C})\cap \Lambda$ is the cone $\ell_1+\ell_2$ translated by $R$. Without loss of generality, we may suppose that 
the half line $\R_+(OP)$ intersects first the half line $R+\ell_1$ at a point $D$. Then it suffices to find a constant $C>0$,
 not depending on $P$, such that $\frac{||OD||}{||OP||} < C$. Using similar triangles, one has
\[
\frac{||OD||}{||OP||} = \frac{\textup{distance}(D,\ell_1)}{\textup{distance}(P,\ell_1)} \ \leq \ \frac{||OR||}{c} ,
\]
where the latter inequality follows from~\eqref{eq:diof}. This finishes the proof of the theorem.
\end{proof}

\section{On Questions~\ref{q1} and~\ref{q4}}\label{sec:q1}

We have  not been able to find counterexamples to Question~\ref{q1}.  
As for Question~\ref{q4}, we note that:

\begin{lemma}\label{lem:impl} Given a surface $X$, if Question~\ref{q4}  is answered affirmatively,  then the same holds for Question~\ref{q1}.
\end{lemma}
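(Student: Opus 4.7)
The plan is to apply Riemann--Roch to $m_1 D$, using the Question~\ref{q4} bound to control $K_X\cdot D$ and a fixed ample divisor to kill $h^2(m_1 D)$. Suppose $D$ satisfies $h^0(X, \cO_X(D)) = 1$ and $D^2 > 0$; then $D$ is linearly equivalent to a nonzero effective divisor. Assuming Question~\ref{q4} has an affirmative answer with constant $m_4 = m_4(X)$, applying it to $D$ yields $K_X \cdot D < m_4 D^2$.

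I would first fix, once and for all, an ample divisor $A$ on $X$. Since $D$ is effective and nonzero, $A\cdot D \geq 1$, so for any integer $m_1 > A\cdot K_X$ we have
\[
A\cdot(K_X - m_1 D) \leq A\cdot K_X - m_1 < 0,
\]
forcing $h^0(X, \cO_X(K_X - m_1 D)) = 0$. By Serre duality this gives $h^2(X, \cO_X(m_1 D)) = 0$, and Riemann--Roch then yields
\[
h^0(X, \cO_X(m_1 D)) \geq \chi(m_1 D) = \chi(\cO_X) + \tfrac{1}{2}\, m_1 D\cdot(m_1 D - K_X).
\]

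Next I would combine this with the Question~\ref{q4} bound: using $D^2 \geq 1$ and $K_X\cdot D < m_4 D^2$,
\[
m_1 D\cdot(m_1 D - K_X) = m_1^2 D^2 - m_1 K_X\cdot D > m_1(m_1 - m_4) D^2 \geq m_1(m_1 - m_4).
\]
Since $A\cdot K_X$, $m_4$, and $\chi(\cO_X)$ depend only on $X$, one can choose $m_1 = m_1(X)$ exceeding $\max\{A\cdot K_X, m_4\}$ and also large enough that $\chi(\cO_X) + m_1(m_1 - m_4)/2 \geq 2$, obtaining $h^0(X, \cO_X(m_1 D)) \geq 2$ as required.

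I do not expect a serious obstacle. The only delicate point is that Question~\ref{q4} must be invoked on $D$ itself (which \emph{is} effective with $D^2 > 0$) rather than on the multiple $m_1 D$, so that the right-hand side $m_4 D^2$ is linear in $D^2$; the ample divisor $A$ plays no role beyond excluding the effectivity of $K_X - m_1 D$. Both steps are routine once the Question~\ref{q4} hypothesis is in place.
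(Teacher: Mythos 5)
Your proof is correct and follows essentially the same route as the paper: apply Riemann--Roch to $m_1 D$, kill $h^2$ by showing $K_X - m_1 D$ cannot be effective once $m_1$ is large, then use the Question~\ref{q4} bound $K_X\cdot D < m_4 D^2$ together with $D^2\geq 1$ to force $\chi(m_1 D)\geq 2$. The only cosmetic difference is that you handle the vanishing of $h^2$ uniformly via $m_1 > A\cdot K_X$ for a fixed ample $A$, whereas the paper splits into cases according to whether $K_X$ is effective; your observation that Question~\ref{q4} must be applied to $D$ itself rather than to $m_1 D$ is also implicit in the paper's argument.
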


\begin{proof} Assume Question~\ref{q4} is answered affirmatively.  First of all, we can  find a positive integer $N$ such that for any effective divisor $D$ on $X$ and for any integer $m>N$, the divisor  $K_X-mD$ is not effective: if $K_X$ is not effective, one takes $N=1$, otherwise one takes $N=H\cdot K_X$, with $H$ an ample divisor on $X$. 

Let $D$ be any effective divisor such that $D^ 2>0$. For all integers $m>N$, one has, by Riemann--Roch 
\[
\dim (|mD|)\ge \vdim (mD)\ge p_a(X)+\frac {m^ 2D^ 2-mD\cdot K}2>p_a(X)+\frac {m(m-m_4)}2.
\]
We can  certainly find an $m_1>N$ such that $m(m-m_4)$  
is large enough as soon as $m>m_1$,
thus answering  Question~\ref{q1} affirmatively. \end{proof}

However, Question~\ref{q4} has a negative answer in general.

\begin{example}\label{ex:K}(Attributed to J. Koll\'ar, see~\cite[Example~1.5.7] {L}).
Let \(E\) be an elliptic curve  and
  set \(Y = E \times E\), with \(F_1\) and \(F_2\) the numerical divisor classes
  of the fibers of the two projections to $E$.
  
Let $a,b$ be coprime integers and let $f_{a,b} \colon Y \to E$ be the morphism sending 
$(x,y)$ to $ax+by$ (where $+$ is the addition on $E$).  
Let $E_{a,b}\cong E$ be the general fibre of $f_{a,b}$.  Then
\[ F_1\cdot E_{a,b}= b^ 2,\,\,\, F_2\cdot E_{a,b}= a^ 2, \,\,\, E_{a,b}^ 2=0.\]
For a fixed non--zero integer $b$, set 
\[
A_n=F_1+E_{n,b},\,\,\, \text{so}\,\,\, A_n^ 2=2b^ 2>0,\,\,\, A_n\cdot (F_1 + F_2)=n^ 2+b^ 2+1 .
\] 

Pick $B\in |2(F_1 + F_2)|$ general and let \(f  \colon X \to Y\) be the double cover
  of \(Y\) branched over \(B\).  Take 
  \[
  D_n = f^*(A_n),\,\,\, \text{so that}\,\,\, D_n^ 2 = 4b^ 2.
  \]  
 However, the canonical divisor \(K_X\) is numerically equivalent to \(f^\ast (F_1+F_2)\), hence
  \[
  D_n \cdot K_X =  2 A_n \cdot (F_1 + F_2)=2(n^ 2+b^ 2+1),\,\,\, \text{so that}\,\,\, \lim_n\frac {D_n\cdot K_X}{D_n^ 2}=+\infty.
  \]
 \end{example}

A similar example can be constructed on a rational
surface.

\begin{example}
 Set \(Y:=X_r\), the blow-up of $\mathbb{P}^2$ at $r\geq 9$ very general points.
  Let \(A_0:=H\) be the strict transform of a general line in \(\P^2\).  By choosing elements from the
  Cremona orbit of \(A_0\), we may find a sequence of rational curves \(A_n\)
  for which \(A_n^2 = 1\), and \(A_n \cdot H \sim  n^m\) for any
  exponent \(m>0\).

Let \(f  \colon X \to Y\) be the double cover branched along the transform on $Y$ of a general conic of $\P^ 2$.
  We have \(K_X = f^\ast (K_Y + H)\).  Set \(D_n = f^\ast (A_n)\).  Then 
 \[D_n\cdot K_X \sim   n^ m, \,\,\, \text{and}\,\,\, D_n^ 2=2,\,\,\, \text{so that}\,\,\, \lim_n\frac {D_n\cdot K_X}{D_n^ 2}=+\infty.\]
 
 Note that  \(Y\) is isomorphic
  to the blow-up of \(\P^1 \times \P^1\) at \(2r\geq 18\) points in special
  position, or equivalently, to the blow-up of \(\P^2\) at \(2r+1\geq 19\) 
  points in special position. 
\end{example}

Of course the natural question arises:

 \begin{question} For which surfaces $X$ does Question~\ref{q4} have an affirmative answer?
  \end{question}
  
  \section{A conjecture by B. Harbourne}\label{Sec:Harb}
  
  The following conjecture is due to B. Harbourne.

\begin{conjecture}[See~\cite{AAVV}, Conjecture~2.5.3]\label {conj:H}
  Let \(X\) be a smooth projective surface.  There exists \(\alpha =
  \alpha(X)\) such that for every irreducible curve $D$ on $X$ one has  \(h^1(X,\mathcal O_X(D)) \leq \alpha h^0(X,\mathcal O_X(D))\).
\end{conjecture}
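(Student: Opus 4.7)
The plan is to estimate $h^1(\mathcal O_X(D))$ via Riemann--Roch and adjunction, separating cases according to the sign of $D^2$.

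If $D^2\geq 0$, then $D$, being irreducible, is nef, and the short exact sequence $0\to \mathcal O_X\to \mathcal O_X(D)\to \mathcal O_D(D)\to 0$ gives
\[
h^1(\mathcal O_X(D))\leq h^1(\mathcal O_X)+h^1(\mathcal O_D(D)).
\]
The first term is an invariant of $X$. Riemann--Roch on $D$ bounds $h^1(\mathcal O_D(D))$ by $p_a(D)$ and forces it to vanish as soon as $D^2$ is large compared with $p_a(D)$; in the remaining bounded range one obtains a linear lower bound on $h^0(\mathcal O_X(D))$ from the same exact sequence together with Clifford's inequality, so the ratio $h^1/h^0$ stays controlled. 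The delicate subcase is $D^2=0$ with $D$ rigid, which I would handle using that an irreducible curve of square zero is either numerically torsion or a multiple fiber of a fibration, reducing to finitely many classes in $\textup{N}^1(X)$.

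The essential case is $D^2<0$: then $D$ is necessarily rigid, so $h^0(\mathcal O_X(D))=1$, and the conjecture requires $h^1(\mathcal O_X(D))\leq \alpha$. Combining Riemann--Roch, Serre duality, and the adjunction formula $D^2+D\cdot K_X=2p_a(D)-2$ one computes
\[
h^1(\mathcal O_X(D)) \;=\; p_a(D)\,-\,D^2\,-\,\chi(\mathcal O_X)\,+\,h^0(K_X-D),
\]
so the conjecture for negative irreducible curves reduces to uniformly bounding the three quantities $p_a(D)$, $-D^2$, and $h^0(K_X-D)$ as $D$ ranges over irreducible rigid curves of negative self-intersection on $X$.

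The hard part will be securing these uniform bounds. The inequality $-D^2\leq B(X)$ is precisely the bounded negativity conjecture, which is open in general; and even when it is granted, adjunction only rewrites it as a bound on $p_a(D)+\tfrac12 D\cdot K_X$, so one must still exclude rigid negative curves with large arithmetic genus compensated by large $D\cdot K_X$. A reasonable strategy is to fix an ample class $A$, use Hodge index on $\{D,A\}$ to bound $D\cdot A$ in terms of $-D^2$, and invoke boundedness of the family of reduced irreducible curves of bounded degree against $A$. The remaining term $h^0(K_X-D)$ could be handled by applying the same analysis to $K_X-D$ in place of $D$, or made to vanish through a Kawamata--Viehweg-type argument once $D\cdot K_X$ is controlled from above. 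Any complete proof along these lines will, at a minimum, subsume the bounded negativity conjecture.
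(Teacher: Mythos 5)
You are attempting to prove a statement that the paper explicitly labels a \emph{conjecture} and for which it contains no proof; worse, the paper goes on to show that Conjecture~\ref{conj:H} is \textbf{false in general}. Immediately after stating it, the authors note that Koll\'ar's Example~\ref{ex:K} furnishes a counterexample (citing~\cite[Corollary~3.1.2]{AAVV}), and Proposition~\ref{prop:H} produces, assuming SHGH, a rational surface with irreducible curves $G_n$ satisfying $h^0(X,\cO_X(G_n))=1$ while $h^1(X,\cO_X(G_n))\to\infty$. So there is no ``paper's proof'' to compare against, and no proof can exist without restricting the class of surfaces.

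Your own analysis actually surfaces the obstruction, but misreads its significance. In the case $D^2<0$ you correctly observe that any proof would subsume the Bounded Negativity Conjecture — the paper records this same implication as a proposition — yet you treat this as merely ``the hard part'' of a viable route rather than as a sign the claim is out of reach (indeed false) in this generality. The $D^2\ge 0$ branch has a genuine gap as well: you assert that Riemann--Roch on $D$ forces $h^1(\cO_D(D))$ to vanish once $D^2$ is large compared with $p_a(D)$, and that Clifford controls the ``remaining bounded range.'' But nothing bounds $p_a(D)$ (equivalently $D\cdot K_X$) in terms of $D^2$: precisely in the counterexamples one has $D^2$ fixed (Koll\'ar: $D_n^2=4b^2$) or growing slowly while $D_n\cdot K_X$ grows without bound, so $h^1$ blows up while $h^0$ stays bounded. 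Finally, the claim that an irreducible curve with $D^2=0$ lies in one of finitely many numerical classes is also false — on $Y=E\times E$ the fibers $E_{a,b}$ of the maps $f_{a,b}$ all have self-intersection $0$ and occupy infinitely many classes in $\textup{N}^1(Y)$. In short, the statement should not be approached as a theorem at all; the paper's contribution here is to exhibit counterexamples and to relate the conjecture, where it might hold, to Question~\ref{q4} and to BNC.
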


This conjecture has a different flavor according to the sign of $D^ 2$. If $D^ 2<0$, an affirmative answer to this question provides an affirmative answer to the famous Bounded Negativity Conjecture (see, e.g.~\cite{AAVV2}):

\begin{conjecture}[Bounded Negativity Conjecture]\label {conj:BNG}
  Let \(X\) be a smooth projective surface.  Then there is an integer $N=N(X)$ such that for any irreducible curve $D$ on $X$ one has $D^2>N$. 
\end{conjecture}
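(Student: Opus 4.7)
The natural plan is to deduce Conjecture~\ref{conj:BNG} from Conjecture~\ref{conj:H}, making precise the implication mentioned just before the statement. Let $D$ be an irreducible curve on $X$; only the case $D^2 < 0$ needs argument, as larger values are bounded below automatically.

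The first step is to observe that such a $D$ is rigid, i.e., $h^0(X,\cO_X(D)) = 1$. Indeed, any effective $D' \in |D|$ satisfies $D \cdot D' = D^2 < 0$, so $D$ must appear among the irreducible components of $D'$; writing $D' = aD + E$ with $E$ effective not containing $D$ and $a \geq 1$, the divisor $D' - D = (a-1)D + E$ is effective and linearly equivalent to zero, hence vanishes. Therefore $a = 1$ and $E = 0$, so $D' = D$. Conjecture~\ref{conj:H} then yields $h^1(X,\cO_X(D)) \leq \alpha(X)$.

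The second step is Riemann--Roch combined with the adjunction formula $D \cdot K_X = 2p_a(D) - 2 - D^2$, which gives
\[
\chi(X,\cO_X(D)) \;=\; \chi(\cO_X) + D^2 - p_a(D) + 1.
\]
Substituting $h^0 = 1$, discarding the non-negative quantities $h^2(X,\cO_X(D))$ and $p_a(D)$ (the latter since $D$ is irreducible), and bounding $h^1$ by $\alpha(X)$ via Harbourne, one obtains
\[
D^2 \;\geq\; -\alpha(X) - \chi(\cO_X),
\]
so that $N(X) := -\alpha(X) - \chi(\cO_X) - 1$ works.

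The main obstacle is of course Conjecture~\ref{conj:H} itself, which is wide open and generally regarded as at least as difficult as Conjecture~\ref{conj:BNG}. Any unconditional attack would have to bypass Harbourne entirely; plausible directions include exploiting dominant self-maps of positive entropy to propagate a bound across an orbit of curves, or a case-by-case analysis on classes of surfaces for which the Mori cone is sufficiently under control (for instance, those with $-K_X$ nef, where adjunction pins down $D^2$ in terms of the genus and $D \cdot (-K_X)$).
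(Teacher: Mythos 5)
Note first that the statement in question is a \emph{conjecture}, and the paper does not prove it; what the paper proves (as a separate Proposition immediately following) is the conditional statement that Conjecture~\ref{conj:H} for $X$ implies Conjecture~\ref{conj:BNG} for $X$. Your proposal correctly identifies this and proves exactly that implication, by exactly the same route as the paper: reduce to $D^2<0$, observe $h^0(X,\cO_X(D))=1$, invoke Harbourne to bound $h^1$, and apply Riemann--Roch together with adjunction, discarding $h^2\ge 0$ and $p_a(D)\ge 0$. Your bound $D^2 \geq -\alpha(X)-\chi(\cO_X)$ coincides with the paper's $D^2\geq -p_a(X)-\alpha-1$ since $\chi(\cO_X)=1+p_a(X)$; you additionally supply the (easy but omitted) justification that an irreducible curve with $D^2<0$ is rigid. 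In short, this is the paper's argument, slightly more explicit, and honest about its conditional nature.
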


\begin{prop}
If Conjecture~\ref{conj:H} holds for a surface $X$ then the Bounded Negativity Conjecture~\ref{conj:BNG} holds for $X$.

\end{prop}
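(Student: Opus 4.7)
The plan is to prove the contrapositive: assume the Bounded Negativity Conjecture fails on $X$, so there exists a sequence of irreducible curves $D_n\subset X$ with $D_n^{2}\to -\infty$; I will show that the ratio $h^1(X,\cO_X(D_n))/h^0(X,\cO_X(D_n))$ is unbounded, contradicting Conjecture~\ref{conj:H}.

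The first step is standard: for any irreducible curve $D$ with $D^2<0$ one has $h^0(X,\cO_X(D))=1$. Indeed, $D$ itself is effective, and if some $D'\in|D|$ differs from $D$, then either $D'$ does not contain $D$ as a component, in which case $D\cdot D'\geq 0$ contradicts $D^2<0$, or $D'$ contains $D$ as a component, in which case $D'-D$ is a nonzero effective divisor linearly equivalent to $0$, which is impossible.

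The second step combines Riemann--Roch with adjunction. Riemann--Roch on the surface yields
\[
h^1(X,\cO_X(D)) \,=\, h^0(X,\cO_X(D))+h^2(X,\cO_X(D))-\chi(\cO_X)-\frac{D^2-D\cdot K_X}{2}.
\]
Adjunction reads $D\cdot K_X=2p_a(D)-2-D^2$, and the arithmetic genus of an integral curve on a smooth surface is nonnegative (via normalization $\nu\colon \widetilde D\to D$, one has $p_a(D)=g(\widetilde D)+\delta$ with $\delta\geq 0$), so $D\cdot K_X\geq -D^2-2$. Substituting into the formula above, and using $h^0=1$ together with $h^2\geq 0$, I obtain the uniform lower bound
\[
h^1(X,\cO_X(D))\;\geq\; -\chi(\cO_X)-D^2
\]
for every irreducible curve $D$ on $X$ with $D^2<0$.

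Applied to the putative sequence $D_n$, this forces $h^1(X,\cO_X(D_n))\to +\infty$, whereas $h^0(X,\cO_X(D_n))=1$ for all $n$; hence no constant $\alpha=\alpha(X)$ can satisfy $h^1\leq\alpha\, h^0$ on the full sequence, contradicting Conjecture~\ref{conj:H}. There is no serious obstacle: the whole argument is a direct application of Riemann--Roch and adjunction on $X$, and the only subtlety worth flagging is the nonnegativity of $p_a(D)$ for a possibly singular integral curve, which is classical.
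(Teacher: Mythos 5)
Your proof is correct and takes essentially the same approach as the paper: both use that an irreducible curve $D$ with $D^2<0$ has $h^0(X,\cO_X(D))=1$, then apply Riemann--Roch together with adjunction (and $p_a(D)\geq 0$, $h^2\geq 0$) to bound $D^2$ in terms of $h^1$. You phrase it as a contrapositive and spell out a couple of steps the paper takes for granted, but the mathematical content is identical.
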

\begin{proof} Let $D$ be an irreducible curve such that $D^2<0$. Then $h^0(X,\mathcal O_X(D))=1$. If Conjecture~\ref{conj:H} holds, then 
\(h^1(X,\mathcal O_X(D)) \leq \alpha$, with $\alpha$ a constant. By Riemann-Roch one has
\[
1+h^2(X,\mathcal O_X(D))=p_a(X)+D^ 2-p_a(D)+2+ h^1(X,\mathcal O_X(D)),
\]
which implies
\[
D^ 2\geq -p_a(X)-\alpha-1,
\]
proving the Bounded Negativity Conjecture.\end{proof}

If $D^ 2\ge 0$, Conjecture~\ref{conj:H} is related to Question~\ref{q4}.

\begin{prop}
Let $X$ be a surface. If Conjecture~\ref{conj:H} holds for irreducible curves $D$ on $X$ such that $D^ 2\ge 0$, then 
Question~\ref{q4} has an affirmative answer for $X$ for irreducible curves $D$.\end{prop}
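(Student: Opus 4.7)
The plan is to combine Riemann--Roch on $X$ with Harbourne's assumed inequality $h^1(X,\cO_X(D))\le\alpha\,h^0(X,\cO_X(D))$ and a linear upper bound for $h^0(X,\cO_X(D))$ in terms of $D^2$ which exploits the irreducibility of $D$.

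First I would rewrite Riemann--Roch in the form
\[
h^0(D)-h^1(D)+h^2(D)\;=\;1+p_a(X)+\frac{D^2-D\cdot K_X}{2},
\]
solve for $h^1(D)$, discard the nonnegative term $h^2(D)$, and substitute the hypothesis $h^1(D)\le\alpha\,h^0(D)$. After rearranging this yields
\[
\frac{D\cdot K_X-D^2}{2}\;\le\;(\alpha-1)\,h^0(X,\cO_X(D))+1+p_a(X),
\]
so everything reduces to bounding $h^0(X,\cO_X(D))$ linearly in $D^2$.

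The core step is precisely this bound. From the short exact sequence
\[
0\longrightarrow\cO_X\longrightarrow\cO_X(D)\longrightarrow\cO_D(D)\longrightarrow 0
\]
I would deduce $h^0(X,\cO_X(D))\le 1+h^0(D,\cO_D(D))$. Since $D$ is an integral projective curve and $\cO_D(D)$ has degree $D^2>0$, the classical fact that $h^0(C,L)\le\deg L+1$ for any line bundle $L$ of nonnegative degree on an integral projective curve $C$ (obtained, for instance, by pulling back to the normalization $\widetilde D\to D$, where the inequality is an immediate consequence of Riemann--Roch) gives $h^0(X,\cO_X(D))\le D^2+2$.

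Substituting into the previous inequality and dividing by $D^2\ge 1$ produces a bound $D\cdot K_X/D^2\le m_4$ with $m_4$ depending only on $\alpha$ and $p_a(X)$, hence only on $X$. The hard part is really the linear bound on $h^0(D)$: Harbourne's inequality by itself only compares $h^1$ and $h^0$, with no reference to intersection numbers, so without the linear control of $h^0(D)$ by $D^2$ coming from irreducibility one could not possibly deduce any bound on $D\cdot K_X/D^2$. This also clarifies why the proposition is restricted to irreducible curves: no analogous linear bound is available for arbitrary effective divisors.
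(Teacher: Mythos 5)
Your proof is correct and follows essentially the same route as the paper's: write Riemann--Roch for $\cO_X(D)$, substitute Harbourne's inequality $h^1\le\alpha h^0$, and then cap $h^0(X,\cO_X(D))$ by $D^2+2$ using the irreducibility of $D$. The paper simply asserts the bound $h^0(X,\cO_X(D))\le D^2+2$ without comment, whereas you justify it via the restriction sequence $0\to\cO_X\to\cO_X(D)\to\cO_D(D)\to 0$ and the elementary estimate $h^0(C,L)\le\deg L+1$ on the integral curve $D$; this is a welcome clarification but not a different argument.
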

\begin{proof} Assume Conjecture~\ref{conj:H} holds for $X$, and we may assume $\alpha>0$. Let $D$ be an irreducible curve on $X$. 
By Riemann-Roch one has
\begin{align*}
h^0(X,\mathcal O_X(D))&\leq h^0(X,\mathcal O_X(D))+h^2(X,\mathcal O_X(D))\\
&=p_a(X)+1+\frac {D^ 2-K_X\cdot D}2+ h^1(X,\mathcal O_X(D))\\
&\leq	 p_a(X)+1+\frac {D^ 2-K_X\cdot D}2+ \alpha h^0(X,\mathcal O_X(D)).\\
\end{align*}
Since $D^ 2+2\geq h^0(X,\mathcal O_X(D))$, we have
\[
K_X\cdot D\leq 2(\alpha-1)(D^ 2+2)+2(p_a(X)+1)+D^ 2\leq (6\alpha+2p_a(X)-4)D^ 2,
\]
whence the assertion.\end{proof}

As well as Question~\ref{q4}, also  Conjecture~\ref{conj:H} is false in general. Indeed, 
Koll\'ar's Example~\ref{ex:K} provides a counterexample  to this conjecture too (see~\cite[Corollary~3.1.2]{AAVV}). 
In this setting it has been asked whether Conjecture~\ref{conj:H}  holds for rational surfaces.  This is probably false too, and in  a very strong sense. In fact we have:

\begin{prop} \label{prop:H} Assume that the SHGH conjecture holds. Then there is a rational surface $X$ and a sequence of irreducible curves 
\(\{G_n\}\) on $X$ with 
\[h^0(X,\cO_X(G_n)) = 1 \; \text{ and } \; h^1(X,\cO_X(G_n)) \;  \text{ arbitrarily large}.\]
\end{prop}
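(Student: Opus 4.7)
The plan is to transport Koll\'ar's Example~\ref{ex:K} to the rational setting by replacing the abelian surface $E \times E$ with a double cover of $Y := X_{10}$, and using the Pell divisors of \S\ref{Sec:Pell} in place of the classes $A_n$. Let $B_0 \subset Y$ be the strict transform of a general smooth conic in $\P^2$ disjoint from the ten base points (so $[B_0] = 2H$ on $Y$ and $B_0 \simeq \P^1$), and let $f \colon X \to Y$ be the double cover branched over $B_0$. The standard formulas $f_* \cO_X = \cO_Y \oplus \cO_Y(-H)$ and $K_X = f^*(K_Y + H)$ make it routine to verify that $q(X) = p_g(X) = P_2(X) = 0$, so Castelnuovo's criterion ensures $X$ is a rational surface. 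For each odd $k \geq 1$ let $G^Y_k \subset Y$ denote the unique effective divisor in $|D_k|$ (which exists by SHGH since $\dim |D_k| = 0$), and set $G_n := f^{-1}(G^Y_{2n+1})$. Since $G^Y_k$ has $\P^2$-degree $d_k \geq 19 > 2$ it cannot lie in $B_0$, and $G^Y_k \cdot B_0 = 2 d_k > 0$, so the restriction $f \colon G_n \to G^Y_k$ is a genuinely ramified double cover of an irreducible curve; hence $G_n$ will be irreducible as soon as $G^Y_k$ is.

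To compute cohomology, write $k = 2n+1$ and apply the projection formula:
\[
h^i(X, f^* D_k) \;=\; h^i(Y, D_k) \;+\; h^i(Y, D_k - H), \qquad i = 0, 1, 2.
\]
By construction $h^0(Y, D_k) = 1$; a direct calculation from $\vdim_Y(D_k) = 0$ gives $\vdim_Y(D_k - H) = -(d_k + 1)$, and the inequality $d_k - 1 > 3 m_k$ (which follows from $d_k > 3 m_k$ for every odd $k \geq 1$) allows SHGH to be applied to $D_k - H$, forcing $h^0(Y, D_k - H) = 0$. The Serre-dual classes $K_Y - D_k$ and $K_Y + H - D_k$ both have strictly negative plane degree, so $h^2(Y, D_k) = h^2(Y, D_k - H) = 0$. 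Hence $h^0(X, G_n) = 1$ and $h^2(X, G_n) = 0$. Using $G_n^2 = 2 D_k^2 = 2 c_k^2$ together with $K_Y \cdot D_k = D_k^2 = c_k^2$ (from $\vdim_Y(D_k) = 0$), one computes $K_X \cdot G_n = 2(K_Y + H) \cdot D_k = 2(c_k^2 + d_k)$, so $\vdim_X(G_n) = -d_k$. Riemann--Roch then gives
\[
h^1(X, G_n) \;=\; h^0(X, G_n) + h^2(X, G_n) - \chi(X, G_n) \;=\; 1 - (1 - d_k) \;=\; d_k,
\]
which is unbounded as $k \to \infty$.

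The hard part will be proving that the Pell curve $G^Y_k$ is itself irreducible. I expect this to follow from a numerical argument using SHGH and the Hodge index theorem: for a nontrivial decomposition $[G^Y_k] = [C_1] + [C_2]$ with both summands effective, the identity $\vdim(C_1) + \vdim(C_2) + C_1 \cdot C_2 = \vdim(D_k) = 0$ together with the inequalities $\vdim(C_i) \geq 0$ (from SHGH, whenever $[C_i]$ satisfies $d > m_1 + m_2 + m_3$) and $C_1 \cdot C_2 \geq 0$ should force $\vdim(C_i) = 0 = C_1 \cdot C_2$, and Hodge index applied to the big class $D_k$ should then rule out such a decomposition. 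The degenerate cases where neither summand satisfies the SHGH hypothesis---i.e., both have $d \leq m_1 + m_2 + m_3$---require a separate analysis in the spirit of the Divisibility Lemma, using the minimality of $c_k$ established by Lemma~\ref{lem:vdim}. Granting this irreducibility, the cohomology computation above provides the required sequence of irreducible curves with $h^0 = 1$ and $h^1 = d_{2n+1}$ growing without bound.
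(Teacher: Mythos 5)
Your proposal follows the paper's route exactly: push the Pell divisors $D_k$ through the double cover $f\colon X\to X_{10}$ branched along a general conic, and extract $h^0$ and $h^1$ from the projection formula and Riemann--Roch. Your cohomology bookkeeping is correct and considerably more explicit than the paper's, which only records $\vdim(|G_n|) = -D_n\cdot H$ and $h^0 = 1$ and leaves the reader to infer $h^1\to\infty$. (Two small remarks: the restriction to odd $k$ is unnecessary---$\vdim_X(f^*D_k) = -d_k$ and the vanishing $h^0(Y, D_k - H) = 0$ hold for every $k\ge 1$; and since $h^1 = h^0 + h^2 - \chi \geq -\vdim = d_k$ always, one does not even need $h^2 = 0$ to conclude unboundedness, though your verification of it is fine.)

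The irreducibility of the unique member of $|D_k|$, which you rightly flag as the hard part, is a genuine gap in your argument---but, notably, the paper's own two-line proof is also completely silent on it, even though Proposition~\ref{prop:H} and Conjecture~\ref{conj:H} concern \emph{irreducible} curves. Your sketched Hodge-index argument is, as written, not sound: the identity $\vdim(C_1) + \vdim(C_2) + C_1\cdot C_2 = 0$ does not combine cleanly with the dimension-version of SHGH stated in the paper, because $\vdim(C_i)\ge 0$ can fail for effective summands that are non-reduced piles of negative curves (e.g.\ $\vdim(2E) = -1$ for $E$ a $(-1)$-curve), and because if the unique member is non-reduced a decomposition $C_1 + C_2$ can share components so that $C_1\cdot C_2 < 0$. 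Thus the ``separate analysis'' you defer is not an edge case but the substance of the argument; one would need a stronger form of SHGH controlling fixed components (or, for $k=1$, the explicit analysis in~\cite{CM}) to close it. In short: same approach as the paper, correct and more detailed cohomology, and an honestly flagged but unresolved irreducibility step that the paper itself elides.
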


\begin{proof}
On \(X_{10}\), let \(\{D_n\}\) be the sequence of Pell divisors as in the counterexample to Questions~\ref{q2} and~\ref{q3}.  
Let \(\pi \colon X \to X_{10}\) be the double cover as above.  Finally, let \(\{G_n\} = \{\pi^\ast (D_n)\}\).  Then 
 \[
 \vdim(|G_n|)=- D_n\cdot H \quad \text{and} \quad h^0(X,\cO_X(G_n)) = 1\]
hence \[ \lim_n h^1(X,\cO_X(G_n))=+\infty , \]
as required.
\end{proof}

The surface \(X\) in the proof of Proposition~\ref{prop:H} is isomorphic to a blow-up of \(\P^2\) at \(21\) special points.  A natural question is the following.

\begin{question} For which surfaces does Harbourne's Conjecture~\ref{conj:H} hold?
  \end{question}


\begin{thebibliography}{AA}

\bibitem {AAVV} Th. Bauer, C. Bocci, S. Cooper, S. Di Rocco, M. Dumnicki,
B. Harbourne, K. Jabbusch, A. L. Knutsen, A. K\"uronya, R. Miranda, J. Ro\'e, H. Schenck, T. Szemberg, Z. Teitler, \emph{Recent developments and open problems in linear series}, Contributions to Algebraic Geometry (2012), 93--140.

\bibitem {AAVV2}Th.~Bauer, B.~Harbourne, A.~L.~Knutsen, A.~K\"uronya,
S.~M\"uller-Stach, X.~Roulleau, T.~Szemberg, {\it Negative curves on 
algebraic surfaces},
Duke Math. J. {\bf 162} (2013), 1877--1894.

\bibitem {Ba} L. B\u adescu, \emph{Algebraic surfaces}, Universitext, Springer-Verlag, New York, 2001.

\bibitem{B} E. Bombieri, \emph{Canonical models of surfaces of general type}, Publications Math\'ematiques de l'IHES, {\bf 42} (1973), 171--219.

\bibitem {CM} C. Ciliberto, R. Miranda, \emph{Homogeneous interpolation on ten  points}, J. Alg. Geom, {\bf 20} (2011) 685--726. 

\bibitem {ELMNP} L. Ein, R. Lazarsfeld, M. Musta\c ta, M. Nakamaye, M. Popa, \emph{Restricted volumes and base loci of linear series}, Amer. J. Math., \textbf{131} (2009), 607--651. 

\bibitem  {F1} A. Franchetta, {\it Sui sistemi pluricanonici di una superficie
algebrica}, Rend. di Mat. e Appl.,  s. V, {\bf 8} (1949), 423--440.

\bibitem  {F2} A. Franchetta, {\it Sui modelli pluricanonici delle superficie
algebriche}, Rend. di Mat. e Appl.,  s. V,  {\bf 9} (1950), 293--308.



\bibitem {G} A. Gimigliano, \emph{On Linear Systems of Plane Curves}, Ph.D. Thesis, Queen's University, Kingston, Ontario, Canada (1987).

\bibitem {KL} A. K\"uronya and V. Lozovanu, \emph{Local positivity of linear series on surfaces}, preprint, arXiv:1411.6205.

\bibitem {L}  R. Lazarsfeld,  \emph{ Positivity in Algebraic Geometry I--II}, Ergebnisse der Mathematik und ihrer
Grenzgebiete, Vols. 48-49, Springer Verlag, Berlin, 2004.




\end{thebibliography}
\end{document}